\documentclass[11pt]{amsart}

\usepackage{newtxtext,newtxmath}   
\usepackage[final]{microtype}      

\usepackage[margin=3.5cm]{geometry}

\usepackage{amsmath}               
\usepackage{mathtools}
\mathtoolsset{showonlyrefs=true}

\usepackage{enumitem}
\setlist[itemize]{topsep=3pt,itemsep=2pt}
\setlist[enumerate]{topsep=3pt,itemsep=2pt}

\usepackage{graphicx}
\graphicspath{{./Pictures/}}
\DeclareGraphicsExtensions{.pdf,.png,.jpg} 

\usepackage[hidelinks,bookmarksdepth=3]{hyperref}
\usepackage{hypcap} 

\usepackage{caption}

\expandafter\def\expandafter\normalsize\expandafter{%
  \normalsize
  \setlength\abovedisplayskip{7pt}%
  \setlength\belowdisplayskip{7pt}%
  \setlength\abovedisplayshortskip{5pt}%
  \setlength\belowdisplayshortskip{5pt}%
}

\numberwithin{equation}{section}

\theoremstyle{definition}

\newtheorem{theorem}{Theorem}
\newtheorem{lemma}[theorem]{Lemma}
\newtheorem{proposition}[theorem]{Proposition}

\theoremstyle{definition}
\newtheorem{definition}[theorem]{Definition}
\newtheorem{remark}[theorem]{Remark}


\author{Matilde Gianocca}
\address{Department of Mathematics, ETH Z\"urich, Switzerland}
\email{matilde.gianocca@math.ethz.ch}

\title{Note on Energy index and first eigenvalue of minimal surfaces in spheres}
\date{June 2025}

\subjclass[2020]{}

\hypersetup{
  pdftitle={Rigidity in the Ginzburg--Landau approximation of harmonic spheres},
  pdfauthor={Matilde Gianocca}
}
\date{}

\begin{document}
\setcounter{tocdepth}{2}
\maketitle
\begin{abstract}
A minimal immersion from a surface to $S^3$ can be viewed both as a critical point of the area and of the energy. Although no difference appears at first order, looking at the respective second variations unveils significant differences. It is well known that whenever the first eigenvalue satisfies $\lambda_1(\Sigma)\geq2$, the index is $\mathrm{ind}_E(\Sigma)\leq 4$. The converse implication is much more subtle. We prove that whenever $\lambda_1(\Sigma)<\frac{1}{6}$, there exists a vector field $X$, orthogonal to the four Möbius vector fields, with negative second variation. We also prove an arbitrary codimension version of this statement: any immersed minimal surface $\Sigma\subset S^n$ with first eigenvalue $\lambda_1(\Sigma)<\frac{n-2}{2n}$ admits a vector field $X$ orthogonal to the $n+1$ Möbius fields with negative second variation.
{}\end{abstract}
\section*{Introduction}
A surface in $S^n$ for which the first variation of area vanishes is said to be a minimal surface and has zero mean curvature. The Morse index of a minimal surface is defined as the maximal dimension of a subspace of normal variations on which the second variation of area is negative definite. When looking at minimal surfaces in $S^3$, the index can be reduced to the scalar operator
\begin{equation}
    J_A(f)=\int_\Sigma |\nabla f|^2-2f^2-|A|^2f^2d\mu.
\end{equation}\\
Urbano \cite{Urbano} proved the classification of low index minimal surfaces in $S^3$: if the minimal surface has index at most five, then it is either a totally geodesic two-sphere, which has index $1$, or a Clifford torus, which has index $5$. For minimal surfaces in higher codimension, even in the case of spheres, the index is poorly understood. For codimension higher than one, the second variation operator cannot be reduced to an operator acting on functions, and many properties of the index change. Kusner--Wang recently studied the case of minimal $2$-tori in $S^4$ in \cite{KusnerPeng}. For ambient spaces with positive isotropic curvature, lower bounds for minimal spheres have been obtained by Micallef--Moore \cite{micallefmoore}. Many of the results regarding index of minimal surfaces in higher codimension rely on work of Ejiri--Micallef \cite{ejirimicallef}, relating the area index of a minimal surface, to the index of its immersion as a harmonic map. \\
In this note, we study the relation between the first eigenvalue of a minimal surface in $S^n$ and its energy index. The energy index is the maximal dimension of a subspace of vector fields tangent to the sphere, such that the operator
\begin{equation}
    D^2E(X)=\int_\Sigma |\nabla^{S^n}X|^2-2|X^N|^2-|X^T|^2d\mu
\end{equation}
is negative definite.
The operator can be rewritten as
\begin{equation}
    D^2E(X)=\sum\limits_i\int_\Sigma |\nabla X^i|^2-2|X^i|^2.
\end{equation}
For any surface $\Sigma$ which is not contained in an equatorial $S^2$, there is an explicit four-dimensional subspace on which $D^2E$ is negative definite, which is the space of Möbius vector fields
\begin{equation}
    \xi_i=\nabla^{S^n} x_i=e_i-\langle e_i,x\rangle x.
\end{equation}
Any vector field $X$ orthogonal to the Möbius fields satisfies
\begin{equation}
    \int_\Sigma X^i=\int_\Sigma X\cdot \xi_i=0. 
\end{equation}
Hence, by looking at the second variation of energy on the orthogonal complement of the Möbius vector fields, one observes that (see also \cite{karpukhin})
\begin{equation}
    \lambda_1(\Sigma)\geq 2\quad\text{implies }\quad D^2E(X)\geq 0, \text{ for all }X\perp \{\xi_1,...,\xi_{n+1}\}.
\end{equation}
Whether the converse holds is much more difficult to answer since the existence of a single function $f$ with 
\begin{equation}
    \int_\Sigma |\nabla f|^2-2f^2<0\quad \text{and}\quad\int_\Sigma f=0
\end{equation}
does not a priori imply the existence of a zero-average vector field tangent to $S^n$ with
\begin{equation}
    \int_\Sigma |\nabla X|^2-2|X|^2<0 \quad \text{and} \quad \int_\Sigma X=0.
\end{equation}
We prove in this note that one can construct such a vector field assuming $\lambda_1<\frac{n-2}{2n}$.
\\

Yau conjectured that for any embedded minimal surface $\Sigma\subset S^3$ the first eigenvalue is given by $\lambda_1(\Sigma)=2$. It would therefore be interesting to understand if\\ \vspace{0.3cm}

\textit{Question:} for a minimal immersed surface $\Sigma\subset S^3$, does $\lambda_1(\Sigma)<2$ imply that the energy index is at least $5$?\\
\vspace{0.5cm}

Our proof relies on identities which are independent of the codimension of the surface in $S^n$, which allow us to prove:
\begin{theorem}\label{maintheorem}
    Let $\Sigma\subset S^n$ be a minimal immersed surface with $\lambda_1(\Sigma)<\frac{n-2}{2n}$. Then,
    \begin{equation}
        \text{there exists }X\perp \xi_1,...,\xi_{n+1}\,\,\,\text{with }D^2E(X)<0. 
    \end{equation}
    Equivalently, there exists a vector field $X$ on the surface $\Sigma$, $\int_\Sigma X=0$ and $X(x)\cdot x\equiv 0$,
    \[
        \sum\limits_{i=1}^{n+1}\int_\Sigma |\nabla X^i|^2-2|X^i|^2<0.
    \]
\end{theorem}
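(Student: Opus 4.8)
The plan is to build $X$ from a first eigenfunction of $\Sigma$ together with a small correction valued in the span of the Möbius fields. Let $f$ be a first eigenfunction, $\Delta f=-\lambda_1 f$. Since $\Sigma\subset S^n$ is minimal, $\Delta x_i=-2x_i$, so each coordinate function $x_i|_\Sigma$ is an eigenfunction with eigenvalue $2$; because $\lambda_1<2$, orthogonality of eigenspaces gives $\int_\Sigma f=0$ and $\int_\Sigma f x_i=0$ for every $i$. The immersion being isometric yields the pointwise identities $\sum_i x_i^2\equiv1$, $\sum_i x_i\nabla x_i\equiv0$, $\sum_i|\nabla x_i|^2\equiv2$, and hence $\Delta(x_jx_k)=-4x_jx_k+2\langle\nabla x_j,\nabla x_k\rangle$. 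These are the only facts about $\Sigma$ that enter, and none of them refers to the codimension.

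First I would fix the test fields. For $y\in\mathbb R^{n+1}$ put $\xi_y:=\nabla^{S^n}\langle y,x\rangle=y-\langle y,x\rangle x$ and consider $f\xi_y$, a vector field along $\Sigma$ tangent to $S^n$. Expanding $D^2E$ in Euclidean components and using the identities above, a direct computation gives
\[
\sum_{j=1}^{n+1}D^2E(f\xi_j)=(n\lambda_1-2n+4)\int_\Sigma f^2 ,
\]
so some $f\xi_j$ already has $D^2E<0$ as soon as $\lambda_1<\tfrac{2(n-2)}{n}$. The defect is that $\int_\Sigma(f\xi_y)^i=-\int_\Sigma f\,x_i\langle y,x\rangle$ need not vanish, i.e.\ $f\xi_y$ need not be $L^2$-orthogonal to $\xi_1,\dots,\xi_{n+1}$. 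To repair this, note that the $L^2$-Gram matrix of the Möbius fields is $(\Vol\Sigma)\delta_{ij}-M_{ij}$, where $M_{ij}:=\int_\Sigma x_ix_j$, and it is positive definite: if $((\Vol\Sigma)I-M)v=0$ then $\int_\Sigma(|v|^2-\langle v,x\rangle^2)=0$, and since $|v|^2-\langle v,x\rangle^2\ge0$ this forces $x$ parallel to $v$ almost everywhere, impossible for a surface. Hence for each $y$ there is a unique $Z_y$ in the span of the $\xi_k$ with $X_y:=f\xi_y+Z_y$ $L^2$-orthogonal to all of them; explicitly $Z_y=\sum_k c_k\xi_k$ with $c=((\Vol\Sigma)I-M)^{-1}Vy$ and $V_{ij}:=\int_\Sigma f\,x_ix_j$. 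Each $X_y$ is tangent to $S^n$, orthogonal to all Möbius fields, and has $\int_\Sigma X_y=0$.

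Next I would evaluate $D^2E(X_y)=y^{\top}\mathcal A\,y+2\,y^{\top}\mathcal B\,c+c^{\top}\mathcal C\,c$, where $\mathcal A,\mathcal B,\mathcal C$ are the second-variation forms on $\{f\xi_j\}$, between $\{f\xi_j\}$ and $\{\xi_k\}$, and on $\{\xi_k\}$. The same identities yield the closed forms $\mathcal C=6M-2(\Vol\Sigma)I$ — negative definite, which is precisely the statement from the introduction that $D^2E$ is negative definite on the span of the Möbius fields — together with $\operatorname{tr}\mathcal A=(n\lambda_1-2n+4)\int_\Sigma f^2$ and, using $\int_\Sigma f\,\Delta(x_ix_j)=-\lambda_1 V_{ij}$, the relation $\mathcal B=(6-\lambda_1)V$. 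Writing $N:=(\Vol\Sigma)I-M>0$ and $G:=N^{-1}V$, the correction part of the trace of the quadratic form $y\mapsto D^2E(X_y)$ is
\[
2\operatorname{tr}(\mathcal B G)+\operatorname{tr}(G^{\top}\mathcal C G)=(6-2\lambda_1)\operatorname{tr}(N^{-1}V^2)+4(\Vol\Sigma)\operatorname{tr}(N^{-2}V^2)\ \ge\ 0 .
\]

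It then remains to show $\sum_j D^2E(X_{e_j})<0$, i.e.\ that this non-negative correction trace is at most $\tfrac32(n-2)\int_\Sigma f^2$; combined with the value of $\operatorname{tr}\mathcal A$ this gives $\sum_j D^2E(X_{e_j})\le n\bigl(\lambda_1-\tfrac{n-2}{2n}\bigr)\int_\Sigma f^2<0$, so $D^2E(X_{e_j})<0$ for some $j$, and $X_{e_j}$ has all the required properties. I expect this last estimate to be the main obstacle. It must exploit $\int_\Sigma f\,x_i=0$ — which makes $Vy$ small exactly in the directions where $N$ is nearly singular, i.e.\ where $\Sigma$ is close to degenerating onto a line — together with $\sum_{ij}V_{ij}^2\le\bigl(\int_\Sigma f^2\bigr)\int_\Sigma(\sum_i x_i^2)^2=(\Vol\Sigma)\int_\Sigma f^2$, in order to push the naive threshold $\tfrac{2(n-2)}{n}$ of the second step down to the sharp value $\tfrac{n-2}{2n}$; should the trace estimate be too lossy in some configuration, one tests $D^2E(X_y)$ instead against a single $y$ adapted to the eigendata of $V$, which reduces to an inequality of the same type.
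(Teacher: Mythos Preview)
Your setup is correct and agrees with the paper's: you take $X_y=f\xi_y+Z_y$ with $Z_y$ the $L^2$-projection of $f\xi_y$ onto $\operatorname{span}\{\xi_k\}$, compute $\operatorname{tr}\mathcal A=(n\lambda_1-2n+4)\int f^2$, $\mathcal B=(6-\lambda_1)V$ and $\mathcal C=6M-2(\Vol\Sigma)I$, and arrive at
\[
\sum_{j}D^2E(X_{e_j})=\operatorname{tr}\mathcal A+(6-2\lambda_1)\operatorname{tr}(N^{-1}V^2)+4(\Vol\Sigma)\operatorname{tr}(N^{-2}V^2).
\]
The paper follows the same decomposition (Proposition~\ref{proposition} there), but works one index at a time rather than through traces.

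The genuine gap is the step you flag yourself: you need the correction trace to be at most $\tfrac32(n-2)\int f^2$, and you do not prove it. The two ingredients you propose --- the Cauchy--Schwarz bound $\|V\|_F^2\le(\Vol\Sigma)\int f^2$ and the orthogonality $\int f x_i=0$ --- are not enough on their own, because $\operatorname{tr}(N^{-2}V^2)$ involves $\|N^{-1}\|_{op}$, which is uncontrolled. What makes the estimate go through in the paper is a further structural input you have not introduced: the tangent/normal split $\xi_i=\xi_i^T+\xi_i^N$ and the eigenvalue identity it produces. Concretely, writing $p_i=\int|a^{(i)}_j\xi_j^N|^2$, $q_i=\int|a^{(i)}_j\xi_j^T|^2$, $r_i=\int|f\xi_i^N|^2$, the paper derives from $-\Delta f=\lambda f$ the relations
\[
\int f\,\xi_i\!\cdot\!\xi_j=-\tfrac{2}{4-\lambda}\int f\,\xi_i^T\!\cdot\!\xi_j^T,\qquad 4\!\int f\xi_i^N a_j\xi_j^N=2(6-\lambda)(p_i+q_i),
\]
together with the decomposition identity $r_i\ge(5-\lambda)p_i+(6-\lambda)q_i$. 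Combining the latter with the elementary bound $4\int f\xi_i^N a_j\xi_j^N\le r_i+4p_i$ and using $\lambda\le1$ yields
\[
\text{correction}_i=-2p_i+4\!\int f\xi_i^N a_j\xi_j^N\le \tfrac32 r_i,
\]
which, summed over $i$, is exactly your missing inequality. None of this is visible from the matrices $N$ and $V$ alone; the tangent/normal split (equivalently, the pair $P=\int\xi^N\!\cdot\!\xi^N$ and $Q=\int\xi^T\!\cdot\!\xi^T$ rather than just $N=P+Q$) is essential, and your fallback of testing a single $y$ adapted to the eigendata of $V$ still runs into the same missing piece. So the architecture of your argument is right and matches the paper, but the proof is incomplete precisely at the point you anticipate, and closing it requires the tangent/normal identities of Proposition~\ref{proposition} rather than the trace bounds you sketch.
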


\section{Preliminaries}
We start by introducing the notation used for minimal immersions, as well as the definitions of index for both the energy and area functionals.
Let $u:\Sigma\to S^n$ be an isometrically immersed minimal surface (closed, orientable). Then, (see Theorem 1.2 in \cite{brendlesurvey})
\begin{equation}
   - \Delta u= 2u=|\nabla u|^2 u.
\end{equation}
Given that $u$ is stationary both for the area and for the energy, one can consider the respective second variations. By definition,
\begin{equation}
    A(u)=\int_{\Sigma}\sqrt{|u_x|^2|u_y|^2-(u_x\cdot u_y)^2}dxdy\leq\frac{1}{2}\int_\Sigma|u_x|^2+|u_y|^2dxdy=E(u)
\end{equation}
which immediately implies for an arbitrary variation $u_t$ of $\Sigma$,
\begin{equation}\label{equationinequalityofsecondderivatives}
    \frac{d^2}{dt^2}\bigg\vert_{t=0}A(u_t)\leq  \frac{d^2}{dt^2}\bigg\vert_{t=0}E(u_t).
\end{equation}
\begin{definition}[index]
    Let $u_t$ be a normal variation of $\Sigma$, that is $\partial_tu_t\in N\Sigma$. In co-dimension one, for a smooth function $f\in C^\infty(\Sigma)$ and $\nu_\Sigma$ a choice of unit normal to $\Sigma$ in $S^3$, (see i.e. \cite{sharp})
\begin{equation}
    D^2A_u(f):=\frac{d^2}{dt^2}\bigg\vert_{t=0}A(u_t)=\int_\Sigma |\nabla f|^2-2f^2-|A_\Sigma|^2f^2
\end{equation}
where $|A_\Sigma|$ denotes the norm of the second fundamental form. In higher codimension the formula is more complicated and will not be needed here (see \cite{simons68}).The \textbf{area index} of $\Sigma$ is the maximal dimension of a subspace on which $D^2A$ is negative definite:
    \begin{equation*}
         \mathrm{ind}_A(u)=\mathrm{max}\big\{\mathrm{dim}(V)\vert V\subset W^{1,2}(\Sigma) \text{ is a linear subspace on which } D^2A_u<0\big\}.
    \end{equation*}
For an arbitrary (not necessarily normal) variation $\partial_tu_t=X\in \Gamma(TS^n\vert_\Sigma)$ of $u$ , the second variation of energy is given by (see \cite{takeuchinharm})
\begin{equation}
    D^2E_u(X):=\frac{d^2}{dt^2}\bigg\vert_{t=0}E(u_t)=\int_\Sigma |\nabla X|^2-\mathrm{R}_{S^n}(\partial_i,X,\partial_i,X)=\int_\Sigma |\nabla X|^2-2|X^\perp|^2-|X^T|^2
\end{equation}
for an orthonormal basis $\partial_1,\partial_2$ of $T\Sigma$. The \textbf{energy index} is defined to be the maximal dimension of a subspace on which $D^2E$ is negative definite:
 \begin{equation*}
        \mathrm{ind}_E(u)=\mathrm{max}\big\{\mathrm{dim}(V)\vert V\subset W^{1,2}(\Gamma(u^{-1}TS^n))\cap L^\infty \text{is a linear subspace with } D^2E_u<0\big\}
    \end{equation*}
\end{definition}

\vspace{0.5cm}

\subsection{Index comparison}
Given a minimal surface $\Sigma\subset S^n$, the two indices $\mathrm{ind}_A$ and $\mathrm{ind}_E$ are typically very different. For example, for the Lawson surfaces $\xi_{1,g}$, Kapouleas--Wyigul (\cite{kapouleaswiygul} proved $\mathrm{ind}_A(\xi_{1,g})=2g+3$, while $\mathrm{ind}_E(\xi_{1,g})=4$. As we explain in this note, it appears that the energy index is more closely related to the first eigenvalue of the minimal surface. The precise relation between the two indices, is given by the following:
\begin{theorem}[Ejiri--Micallef, \cite{ejirimicallef}] Let $u:\Sigma\to N$ be a branched minimal immersion. Then,
\begin{equation}
    \mathrm{ind}_E(u)\leq\mathrm{ind}_A(u)\leq\mathrm{ind}_E(u)+r
\end{equation}
for 
\begin{equation}
    r=\begin{cases}
 6g -6-2b\text{
if } b\leq 2g -3,\\
 4g -2+2\big[\frac{-b}
{2} \big]\text{ if } 2g-2\leq b\leq 4g-4,\\
 0 \text{
if }b \geq 4g -3.
    \end{cases}
\end{equation}
    
\end{theorem}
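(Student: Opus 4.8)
The plan is to realise both indices as the index of a single Hessian on the product of the space of maps with the finite-dimensional space of conformal structures, and to read off the comparison from a partial-minimisation (Schur-complement) computation. Write $\mathcal{E}(u,[h])$ for the Dirichlet energy of $u:\Sigma\to N$ computed with respect to a conformal class $[h]$ on $\Sigma$; in two dimensions $\mathcal{E}$ depends only on $[h]$, so $[h]$ ranges over Teichm\"uller space $\mathcal{T}$, with $\dim_{\mathbb{R}}\mathcal{T}=2\dim_{\mathbb{C}}H^0(\Sigma;K^2)$ (which is $6g-6$ for $g\geq 2$). The two functionals are linked by $A(u)=\min_{[h]}\mathcal{E}(u,[h])$, the minimum being attained exactly at the conformal class $[h_0]$ induced by $u$; this is the infinitesimal content of $A\leq E$ in \eqref{equationinequalityofsecondderivatives}, with equality for conformal maps. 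Thus $(u,[h_0])$ is a critical point of $\mathcal{E}$, and I would organise its Hessian in block form
\begin{equation}
\mathrm{Hess}\,\mathcal{E}=\begin{pmatrix}H_{uu}&H_{u\tau}\\ H_{\tau u}&H_{\tau\tau}\end{pmatrix},
\end{equation}
where $H_{uu}=D^2E_u$ is exactly the energy Hessian whose index is $\mathrm{ind}_E$, the block $H_{\tau\tau}$ is the second variation in the Teichm\"uller directions, and $H_{u\tau}$ is the coupling.

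First I would dispatch the two soft inequalities. Since $A\leq E$ with equality at the critical point, $\mathrm{Hess}\,A\leq H_{uu}=D^2E$ as quadratic forms on map variations; because the reparametrisation (tangential) directions are null for the reparametrisation-invariant $A$, the negative index of $\mathrm{Hess}\,A$ on all map variations coincides with $\mathrm{ind}_A$, and every subspace on which $D^2E<0$ is negative for $\mathrm{Hess}\,A$, giving the lower bound $\mathrm{ind}_E\leq\mathrm{ind}_A$. For the upper bound I would count the index of the full Hessian: because $[h_0]$ minimises $\mathcal{E}(u,\cdot)$ we have $H_{\tau\tau}\geq 0$, so the Schur complement over the Teichm\"uller block gives $\mathrm{index}(\mathrm{Hess}\,\mathcal{E})=\mathrm{ind}_A$, while intersecting any negative subspace with the map directions gives $\mathrm{index}(\mathrm{Hess}\,\mathcal{E})\leq\mathrm{ind}_E+\dim(\text{relevant Teichm\"uller directions})$. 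Combining, $\mathrm{ind}_A\leq\mathrm{ind}_E+\dim(\text{relevant Teichm\"uller directions})$, and the whole problem reduces to identifying those directions.

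The crux is that these directions are governed by the branch points. Parametrising $\mathcal{T}$ by holomorphic quadratic differentials, the first variation of $\mathcal{E}$ in a direction $q$ is $\Re\langle\Phi(u),q\rangle$, where $\Phi(u)$ is the Hopf differential; since $u$ is conformal, $\Phi(u)\equiv 0$, so the obstruction to decreasing area sits at second order and couples $q$ to the holomorphic section $\partial u$, which vanishes exactly on the branch divisor $D$ of total order $b$. The effect I expect is that the Teichm\"uller directions contributing to $\mathrm{ind}_A-\mathrm{ind}_E$ are the holomorphic quadratic differentials divisible by $D$, so $r=2\dim_{\mathbb{C}}H^0(\Sigma;K^2(-D))$; note this reduces to $\dim_{\mathbb{R}}\mathcal{T}$ when $b=0$, as it must. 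I would finish with Riemann--Roch: since $\deg(K^2(-D))=4g-4-b$ and, by Serre duality, $H^1(K^2(-D))\cong H^0(K^{-1}(D))^\ast$ with $\deg(K^{-1}(D))=b-2g+2$, the cohomology vanishes in the two extreme ranges, giving $r=6g-6-2b$ for $b\leq 2g-3$ and $r=0$ for $b\geq 4g-3$. In the intermediate range $2g-2\leq b\leq 4g-4$ both line bundles have degree in $[0,2g-2]$, so the divisor is special; here I would invoke Clifford's theorem to bound $\dim_{\mathbb{C}}H^0(K^{-1}(D))\leq\tfrac12(b-2g+2)+1$, which combined with Riemann--Roch and the integrality identity $2\lceil b/2\rceil=-2[-b/2]$ yields exactly $r=4g-2+2[-b/2]$.

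The main obstacle I anticipate is precisely this third paragraph. First, turning the heuristic ``$q$ couples to $\partial u$'' into a rigorous identification of the relevant Teichm\"uller directions with $H^0(\Sigma;K^2(-D))$ requires carefully computing the second variation of $\mathcal{E}$ in the Teichm\"uller directions at a \emph{branched} critical point and controlling the coupling $H_{u\tau}$ in the degenerate situation (kernel of $H_{\tau\tau}$) created by the branch points, as well as verifying that $D$ is special because it is the zero divisor of the holomorphic differential $\partial u$, so that Clifford's theorem applies legitimately. Second, the analytic setup---the infinite-dimensional partial minimisation over maps, and the elliptic theory near branch points where $\partial u$ degenerates---must be made rigorous, for instance by working in weighted spaces adapted to $D$ so that the implicit function theorem and the Schur-complement index count remain valid.
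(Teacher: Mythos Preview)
The paper does not prove this theorem at all; it is quoted verbatim from \cite{ejirimicallef} as background for the index comparison discussion, with no argument supplied. There is therefore no ``paper's own proof'' to compare your proposal against.

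That said, your outline is a faithful sketch of the actual Ejiri--Micallef strategy: view $A(u)=\min_{[h]}\mathcal{E}(u,[h])$, compute the Hessian of $\mathcal{E}$ on maps times Teichm\"uller space, obtain the index gap from a Schur-complement count over the Teichm\"uller block, identify the surviving Teichm\"uller directions with holomorphic quadratic differentials vanishing on the branch divisor, and then get the three cases for $r$ from Riemann--Roch (extreme ranges) and Clifford's inequality (middle range). The technical obstacles you name---the degenerate Schur complement at branch points and the rigorous identification of the relevant subspace of $H^0(K^2)$---are precisely where the work in \cite{ejirimicallef} lies. One minor slip: your justification that $D$ ``is special because it is the zero divisor of the holomorphic differential $\partial u$'' is off, since $\partial u$ is a section of $K\otimes u^*T^{1,0}N$, not of $K$; what one actually uses for Clifford is just the degree bound $0\le\deg K^2(-D)\le 2g-2$ together with effectivity, not that $D$ is canonical.
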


\section{Results}
Throughout this section, $\Sigma\to S^n$ is an isometric minimal immersion, with image not contained in a totally geodesic $S^2\subset S^n$.
\subsection{Computations with Möbius fields}
Consider the sections given by
\begin{equation}
    \xi_i (x)=e_i-\langle e_i,x\rangle x\in \mathbb{R}^{n+1},\quad \xi_i\perp x=0.
\end{equation}
Computing the covariant derivatives, for all $v\in T\Sigma$:
\begin{align}
    \nabla_v^{S^n}\xi_i&=-\nabla_v^{\mathbb{R}^{n+1}}(\langle e_i,x\rangle x)+\langle \nabla_v^{\mathbb{R}^{n+1}}(\langle e_i,x\rangle x), x\rangle x\\&= -\partial_v\langle e_i,x\rangle x-\langle e_i,x\rangle v + \partial_v\langle e_i,x\rangle x\\
    &= -\langle e_i,x\rangle v\\
    &=-x_iv.
\end{align}
Computing the second variation of energy in the direction $\xi_i$ (see \cite{Xin1}):
\begin{equation}
    D^2E_u(\xi_i)= \int_\Sigma|\nabla^{S^n}_u\xi_i|^2-2|\xi_i|^2+|\xi_i^T|^2=\int_\Sigma 2x_i^2-2|\xi_i|^2+|\xi_i^T|^2
\end{equation}
The second term is given by
\begin{equation}
    |\xi_i|^2=\langle e_i-\langle e_i,x\rangle x,e_i-\langle e_i,x\rangle x\rangle=1-2\langle e_i,x\rangle^2+\langle e_i,x\rangle^2=1-x_i^2,
\end{equation}
while the third term is (for a ONB $f_j$ of $T\Sigma$),
\begin{equation}
    |\xi_i^T|^2=\langle \xi_i,f_j\rangle^2=\langle e_i,f_j\rangle^2=|\nabla^\Sigma(e_i\cdot x)|^2=|\nabla^\Sigma x_i|^2.
\end{equation}
The second variation is therefore given by
\begin{equation}
    D^2E_\Sigma(\xi_i)=\int_\Sigma 2|\nabla^\Sigma x_i|^2-2|\xi_i|^2=\int_\Sigma 6x_i^2-2 = -2\int_\Sigma|\xi_i|^2 + 4\int_\Sigma |x_i|^2
\end{equation}
\begin{equation}\label{secondvariationmob:equation}
    =-2\int_{\Sigma}|\xi_i|^2+2|\xi_i^T|^2 = -2\int_{\Sigma}|\xi_i^N|^2.
\end{equation}
Since \eqref{secondvariationmob:equation} holds for any $\xi_v=\nabla^{S^3}\langle x,v\rangle$, $v\in\mathbb{R}^{n+1}$, we deduce:
\begin{lemma}[El Soufi \cite{elsoufi}]
    Let $u:\Sigma\to S^n$ be an isometric minimal immersion, such that the image of $u$ is not contained in a totally geodesic $S^2$. Then
    \begin{equation}
        \mathrm{ind}_E(u)\geq n+1.
    \end{equation}
\end{lemma}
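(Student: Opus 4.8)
The plan is to exhibit an $(n+1)$-dimensional subspace on which $D^2E$ is \emph{negative definite}, namely $V:=\mathrm{span}\{\xi_1,\dots,\xi_{n+1}\}$; by the definition of the energy index this gives $\mathrm{ind}_E(u)\ge n+1$. First I would record that $\dim V=n+1$: the assignment $v\mapsto\xi_v=\nabla^{S^n}\langle x,v\rangle=v-\langle v,x\rangle x$ is linear, and if $\xi_v\equiv 0$ on $\Sigma$ then $v=\langle v,x\rangle x$ at every point of $\Sigma$, which for $v\ne 0$ forces the image of $\Sigma$ into the two points $\pm v/|v|$ — impossible for a surface. Next, by \eqref{secondvariationmob:equation} applied to each $\xi_v$, one has $D^2E(\xi_v)=-2\int_\Sigma|\xi_v^N|^2\le 0$, so $D^2E$ is automatically negative \emph{semi}definite on $V$. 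The content of the lemma is therefore to rule out the degenerate case: I must show that, under the hypothesis that $\Sigma$ is not contained in a totally geodesic $S^2$, there is no $v\ne 0$ with $\xi_v^N\equiv 0$ on $\Sigma$.

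Assume toward a contradiction that $\xi_v^N\equiv 0$ for some unit vector $v$, and put $h:=\langle x,v\rangle$, viewed as a function on $\Sigma$. Since $\xi_v=\nabla^{S^n}h$ and the tangential projection of $\nabla^{S^n}h$ along $\Sigma$ is $\nabla^\Sigma h$, the assumption says exactly that $\xi_v$ is tangent to $\Sigma$, i.e. $\xi_v=\nabla^\Sigma h$. I would then compute the $\Sigma$-Hessian of $h$ from the flat connection of $\mathbb{R}^{n+1}$: for $X,Y\in T\Sigma$,
\[
\mathrm{Hess}_\Sigma h(X,Y)=\big\langle A^{S^n}(X,Y),v\big\rangle-\langle X,Y\rangle\,h,
\]
where $A^{S^n}$ is the second fundamental form of $\Sigma$ in $S^n$ and the term $-\langle X,Y\rangle\,h$ comes from the second fundamental form of $S^n$ in $\mathbb{R}^{n+1}$. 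Because $A^{S^n}(X,Y)$ is normal to $\Sigma$ in $S^n$, $\langle A^{S^n}(X,Y),v\rangle=\langle A^{S^n}(X,Y),\xi_v^N\rangle=0$, and we are left with $\mathrm{Hess}_\Sigma h=-h\,g_\Sigma$. Moreover $h$ is non-constant: a constant value would satisfy $0=\Delta_\Sigma h=-2h$, hence $h\equiv 0$ and $\xi_v\equiv v$, a unit-length vector field normal to $\Sigma$, contradicting $\xi_v^N\equiv 0$.

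Now $(\Sigma,g_\Sigma)$ is a closed surface supporting a non-constant function $h$ with $\mathrm{Hess}_\Sigma h=-h\,g_\Sigma$, so Obata's theorem forces $(\Sigma,g_\Sigma)$ to be isometric to the unit round $S^2$; in particular $K_\Sigma\equiv 1$. On the other hand the Gauss equation for a minimal surface in $S^n$ reads $K_\Sigma=1-\tfrac12|A^{S^n}|^2$, whence $A^{S^n}\equiv 0$: the immersion is totally geodesic and its image is a great $2$-sphere, contradicting the standing hypothesis. Therefore $D^2E(\xi_v)<0$ for every $v\ne 0$, $D^2E$ is negative definite on $V$, and $\mathrm{ind}_E(u)\ge\dim V=n+1$.

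The crux is the last paragraph — converting the pointwise identity $\xi_v^N\equiv 0$ into the global conclusion that $\Sigma$ sits in a totally geodesic $S^2$ — and the Obata route above is the cleanest way to package it. If one prefers to avoid Obata, the same rigidity can be obtained by hand: $\xi_v^N\equiv 0$ means $\Sigma$ is swept out by the $S^n$-great semicircles from $v$ to $-v$ (the integral curves of $\nabla^{S^n}h$, which are tangent to $\Sigma$), so in geodesic polar coordinates around the pole $x=v$ the immersion takes the form $x(t,r)=\cos t\,v+\sin t\,u(r)$ with $u(r)\in v^\perp\cap S^n$ unit-speed; substituting this into the minimality relation $\Delta_\Sigma x=-2x$ collapses to $u''=-u$, so $u$ parametrizes a great circle in a $2$-plane of $v^\perp$ and $x(\Sigma)$ lies in a $3$-dimensional subspace of $\mathbb{R}^{n+1}$. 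Everything else — the dimension count for $V$ and the semidefiniteness coming from \eqref{secondvariationmob:equation} — is routine.
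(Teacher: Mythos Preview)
Your proof is correct and follows exactly the paper's approach: the paper deduces the lemma directly from the identity \eqref{secondvariationmob:equation}, $D^2E(\xi_v)=-2\int_\Sigma|\xi_v^N|^2$, applied to the $(n+1)$-dimensional space $V=\mathrm{span}\{\xi_1,\dots,\xi_{n+1}\}$. The paper, citing El Soufi, does not spell out the rigidity step (why $\xi_v^N\not\equiv 0$ for $v\ne 0$ under the hypothesis), whereas you supply a clean justification via Obata's theorem and the Gauss equation; this is a genuine addition of detail rather than a different method.
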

\begin{remark}
    The Möbius fields $\xi_i$ are not eigensections for $D^2E$ in general. Their normal parts $\xi_i^\perp$ are eigensections for the second variation of area.
\end{remark}
\subsection{Canonical variations for functions}
\begin{proposition}[Canonical Möbius variations]\label{prop1}
    Let $u:\Sigma^2\to S^n$ be a minimal immersion and $f\in C^\infty(\Sigma)$. Then, for the ambient Moebius fields $\xi_i=\nabla^{S^n}x_i\big\vert_\Sigma$, 
    \begin{equation}
       \sum\limits_{i=1}^{n+1} D^2E(f\xi_i)= n\int_{\Sigma}|\nabla f|^2-(2n-4)\int_\Sigma |f|^2.
    \end{equation}
\end{proposition}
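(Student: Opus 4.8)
The plan is to compute $D^2E(f\xi_i)$ pointwise, expand every term with the product rule for the induced connection on $u^{-1}TS^n$ (the connection $\nabla$ appearing in the definition of $D^2E$) together with the identity $\nabla^{S^n}_v\xi_i=-x_iv$ established just above, and then sum over $i$. The summation collapses thanks to three elementary identities valid because $u$ takes values in the unit sphere: $\sum_{i=1}^{n+1}|\xi_i|^2=\sum_i(1-x_i^2)=n$; $\sum_{i=1}^{n+1}x_i\nabla^\Sigma x_i=\frac{1}{2}\nabla^\Sigma|x|^2=0$; and $\sum_{i=1}^{n+1}|\nabla^\Sigma x_i|^2=2$, the last because $\sum_i\langle e_i,\partial_\alpha\rangle^2=|\partial_\alpha|^2=1$ for any unit $\partial_\alpha\in T_xS^n$ and $\dim\Sigma=2$.

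For the gradient term, fix a local orthonormal frame $\partial_1,\partial_2$ of $T\Sigma$. The product rule gives
\[
\nabla_{\partial_\alpha}(f\xi_i)=(\partial_\alpha f)\,\xi_i+f\,\nabla_{\partial_\alpha}\xi_i=(\partial_\alpha f)\,\xi_i-f x_i\,\partial_\alpha,
\]
and since $\langle\xi_i,\partial_\alpha\rangle=\langle e_i,\partial_\alpha\rangle=\partial_\alpha x_i$ (using $\partial_\alpha\perp x$) and $|\partial_\alpha|=1$, squaring and summing over $\alpha$ yields
\[
|\nabla(f\xi_i)|^2=|\nabla^\Sigma f|^2\,|\xi_i|^2-2 f x_i\,\langle\nabla^\Sigma f,\nabla^\Sigma x_i\rangle+2 f^2 x_i^2.
\]
Now summing over $i$: the first term gives $n|\nabla^\Sigma f|^2$, the cross term vanishes by the identity $\sum_i x_i\nabla^\Sigma x_i=0$, and the last gives $2f^2$. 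Hence $\sum_i|\nabla(f\xi_i)|^2=n|\nabla^\Sigma f|^2+2f^2$.

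For the zeroth-order part, write $D^2E(X)=\int_\Sigma|\nabla X|^2-2|X|^2+|X^T|^2$ (equivalent to the stated form via $|X^N|^2=|X|^2-|X^T|^2$). With $X=f\xi_i$ one has $|f\xi_i|^2=f^2|\xi_i|^2$ and $|(f\xi_i)^T|^2=f^2|\xi_i^T|^2=f^2|\nabla^\Sigma x_i|^2$, the latter as already recorded in the computation of $D^2E(\xi_i)$. Summing over $i$ and using $\sum_i|\xi_i|^2=n$ and $\sum_i|\nabla^\Sigma x_i|^2=2$ gives $\sum_i\bigl(-2|f\xi_i|^2+|(f\xi_i)^T|^2\bigr)=-2n f^2+2f^2$. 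Adding this to the gradient contribution and integrating over $\Sigma$,
\[
\sum_{i=1}^{n+1}D^2E(f\xi_i)=\int_\Sigma n|\nabla^\Sigma f|^2+(4-2n)f^2=n\int_\Sigma|\nabla f|^2-(2n-4)\int_\Sigma|f|^2,
\]
as claimed.

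The computation is elementary and there is no real obstacle; the only steps worth double-checking are the vanishing of the cross term $\sum_i x_i\langle\nabla^\Sigma f,\nabla^\Sigma x_i\rangle$ — which is precisely what removes any first-order coupling to the position function and is where the constraint $|x|\equiv1$ enters — and the frame-independence of the trace identities $\sum_i|\xi_i|^2=n$ and $\sum_i|\nabla^\Sigma x_i|^2=\dim\Sigma$.
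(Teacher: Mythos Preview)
Your proof is correct and follows essentially the same approach as the paper: both expand $|\nabla(f\xi_i)|^2$ via the product rule using $\nabla_v\xi_i=-x_iv$, then sum over $i$ using the pointwise identities $\sum_i|\xi_i|^2=n$, $\sum_ix_i^2=1$, and $\sum_i|\xi_i^T|^2=2$. The only minor difference is in the handling of the cross term: the paper keeps the individual-$i$ expression, integrates by parts to turn $f\nabla f\cdot\nabla|\xi_i|^2$ into $-\tfrac12 f^2\Delta|\xi_i|^2$, and then observes that $\sum_i\Delta|\xi_i|^2=\Delta n=0$ after summation, whereas you sum first and kill the cross term pointwise via $\sum_i x_i\nabla^\Sigma x_i=\tfrac12\nabla^\Sigma|x|^2=0$ --- a slightly cleaner variant that avoids integration by parts altogether.
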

\begin{proof}
  \begin{align}
        D^2E(f\xi_i)&=\int_\Sigma |\nabla f|^2|\xi_i|^2+f^2|\nabla \xi_i|^2+2f\nabla f\xi_i\nabla \xi_i-2f^2|\xi_i|^2+f^2|\xi_i^T|^2\\
        &=\int_\Sigma |\nabla f|^2|\xi_i|^2+f^2|\nabla \xi_i|^2+f\nabla f\nabla |\xi_i|^2-2f^2|\xi_i|^2+f^2|\xi_i^T|^2\\
       &=\int_\Sigma |\nabla f|^2|\xi_i|^2+f^2|\nabla \xi_i|^2-\frac{1}{2}f^2\Delta |\xi_i|^2-2f^2|\xi_i|^2+f^2|\xi_i^T|^2\\
       &=\int_\Sigma |\nabla f|^2(1-x_i^2)+f^22x_i^2-\frac{1}{2}f^2\Delta |\xi_i|^2-2f^2(1-x_i^2)+f^2|\xi_i^T|^2
    \end{align}
    Therefore, summing over $i$,
    \begin{equation}
        \sum\limits_{i=1}^{n+1}D^2E(f\xi_i)=n\int_\Sigma |\nabla f|^2-\int_\Sigma(2n-4)f^2
    \end{equation}
\end{proof}

Next, we prove that if $\Sigma$ has small first eigenvalue $\lambda_1$, the corresponding Möbius variations $f\xi_i$ can be used to obtain a negative direction for $D^2E$ which is orthogonal to all Möbius fields.
\vspace{-0.3cm}
\begin{proposition}\label{proposition}
     Let $u:\Sigma^2\to S^n$ be a minimal immersion and $f\in C^\infty(\Sigma)$ a solution of $-\Delta f=\lambda f$ with $\lambda\leq 1$. Then, for the ambient Möbius field $\xi_i=\nabla^{S^n}x_i\big\vert_\Sigma$, if 
    \begin{equation}
     D^2E(f\xi_i)= <-\frac{3}{2}\int_\Sigma |f\xi_i^N|^2,
    \end{equation}
    the projection $X_f^i$ orthogonal to all Möbius fields $\xi_v$ satisfies
    \begin{equation}
        D^2E(X_f^i)<0.
    \end{equation}
\end{proposition}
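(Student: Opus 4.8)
The plan is to take $X_f^i$ to be the $L^2(\Sigma)$-orthogonal projection of $f\xi_i$ onto the orthogonal complement of the Möbius space $M:=\operatorname{span}\{\xi_1,\dots,\xi_{n+1}\}$; thus $f\xi_i=X_f^i+P$ with $P\in M$ and $\int_\Sigma\langle X_f^i,\xi_v\rangle=0$ for every $v$, and the task is to estimate $D^2E(X_f^i)$ against $D^2E(f\xi_i)$. Let $B$ denote the symmetric bilinear form of $D^2E$, so $B(Y,Z)=\int_\Sigma\langle\nabla Y,\nabla Z\rangle-2\langle Y^N,Z^N\rangle-\langle Y^T,Z^T\rangle$. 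The first step is a bilinear version of \eqref{secondvariationmob:equation}: for \emph{any} section $Z$ of $u^{-1}TS^n$ (i.e.\ $\langle Z,x\rangle\equiv0$) one has
\[
B(Z,\xi_k)=-2\int_\Sigma\langle Z^N,\xi_k^N\rangle .
\]
Indeed $\nabla_v\xi_k=-x_kv$ gives $\langle\nabla Z,\nabla\xi_k\rangle=-x_k\sum_\alpha\langle\nabla_{E_\alpha}Z,E_\alpha\rangle$ for an orthonormal frame $E_1,E_2$ of $T\Sigma$, and $\sum_\alpha\langle\nabla_{E_\alpha}Z,E_\alpha\rangle=\operatorname{div}_\Sigma(Z^T)$ because the normal contribution is $-\langle Z^N,\vec H\rangle=0$ by minimality; hence $\int_\Sigma\langle\nabla Z,\nabla\xi_k\rangle=\int_\Sigma\langle\nabla^\Sigma x_k,Z^T\rangle=\int_\Sigma\langle\xi_k^T,Z^T\rangle$, which cancels the last term of $B$. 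In particular $D^2E(P)=-2\int_\Sigma|P^N|^2$, $B(X_f^i,P)=-2\int_\Sigma\langle(X_f^i)^N,P^N\rangle$, and expanding $D^2E(f\xi_i)=D^2E(X_f^i)+2B(X_f^i,P)+D^2E(P)$ with $(X_f^i)^N=(f\xi_i)^N-P^N$ gives
\[
D^2E(X_f^i)=D^2E(f\xi_i)+4\int_\Sigma\langle(f\xi_i)^N,P^N\rangle-2\int_\Sigma|P^N|^2 .
\]

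The crucial second step evaluates the cross term exactly. Write $P=\sum_vc_v\xi_v$, $\phi:=\sum_vc_vx_v$, so that $P^T=\nabla^\Sigma\phi$, $(f\xi_i)^T=f\,\nabla^\Sigma x_i$, and $\Delta\phi=-2\phi$ since $-\Delta x_v=2x_v$. Using $\langle\nabla x_i,\nabla\phi\rangle=\tfrac12\Delta(x_i\phi)+2x_i\phi$, $\int_\Sigma f\,\Delta(x_i\phi)=\int_\Sigma(\Delta f)\,x_i\phi=-\lambda\int_\Sigma fx_i\phi$, and $\int_\Sigma f=0$ (which holds as $\lambda>0$, the constant case being irrelevant), one finds $\int_\Sigma\langle(f\xi_i)^T,P^T\rangle=\tfrac{4-\lambda}{2}\int_\Sigma fx_i\phi$. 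Since $P$ is the projection, $\int_\Sigma\langle f\xi_i,P\rangle=\int_\Sigma|P|^2$, while $\int_\Sigma\langle f\xi_i,P\rangle=\int_\Sigma fP^i=-\int_\Sigma fx_i\phi$ (because $P^i=c_i-x_i\phi$); hence $\int_\Sigma fx_i\phi=-\int_\Sigma|P|^2$ and $\int_\Sigma\langle(f\xi_i)^T,P^T\rangle=-\tfrac{4-\lambda}{2}\int_\Sigma|P|^2$. Feeding this into $\int_\Sigma|P|^2=\int_\Sigma\langle(f\xi_i)^T,P^T\rangle+\int_\Sigma\langle(f\xi_i)^N,P^N\rangle$ yields the clean identity
\[
\int_\Sigma\langle(f\xi_i)^N,P^N\rangle=\frac{6-\lambda}{2}\int_\Sigma|P|^2 .
\]

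Now set $A:=\int_\Sigma|f\xi_i^N|^2$ and $s:=\int_\Sigma|P|^2$, and apply Cauchy–Schwarz to this identity, $\tfrac{6-\lambda}{2}\,s\le A^{1/2}\bigl(\int_\Sigma|P^N|^2\bigr)^{1/2}$, in two complementary ways: bounding $\int_\Sigma|P^N|^2\le s$ gives $s\le\tfrac{4A}{(6-\lambda)^2}$, and keeping $\int_\Sigma|P^N|^2$ gives $\int_\Sigma|P^N|^2\ge\tfrac{(6-\lambda)^2}{4A}s^2$. Substituting both into the displayed formula for $D^2E(X_f^i)$,
\[
D^2E(X_f^i)\le D^2E(f\xi_i)+2(6-\lambda)\,s-\frac{(6-\lambda)^2}{2A}\,s^2 ,
\]
and since $6-\lambda>2$ the right-hand side is increasing in $s$ on $[0,\tfrac{4A}{(6-\lambda)^2}]$ (its vertex is at $s=\tfrac{2A}{6-\lambda}$, beyond that endpoint), so it is largest at the endpoint:
\[
D^2E(X_f^i)\le D^2E(f\xi_i)+\frac{8(5-\lambda)}{(6-\lambda)^2}\int_\Sigma|f\xi_i^N|^2 .
\]
For $\lambda\le1$ the coefficient $\tfrac{8(5-\lambda)}{(6-\lambda)^2}$ is at most $\tfrac{32}{25}<\tfrac32$, so the hypothesis $D^2E(f\xi_i)<-\tfrac32\int_\Sigma|f\xi_i^N|^2$ forces $D^2E(X_f^i)<0$.

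I expect the main obstacle to be the second step — extracting the exact factor $\tfrac{6-\lambda}{2}$ — because that is where the eigenvalue equation, and hence the restriction on $\lambda$, enters essentially, and one must keep track of several integrations by parts together with $\int_\Sigma f=0$. The final optimization is elementary, but it should be stressed that Cauchy–Schwarz has to be used in both directions: the estimate $s\le\tfrac{4A}{(6-\lambda)^2}$ by itself only produces the coefficient $\tfrac{8}{6-\lambda}$, and the lower bound for $\int_\Sigma|P^N|^2$ by itself only produces $2$, and for $\lambda$ close to $1$ each of these exceeds $\tfrac32$; it is their combination that yields a constant strictly below $\tfrac32$.
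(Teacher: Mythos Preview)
Your proof is correct and follows the paper's route through the two structural identities: the bilinear formula $B(Z,\xi_k)=-2\int_\Sigma\langle Z^N,\xi_k^N\rangle$ (giving $D^2E(X_f^i)=D^2E(f\xi_i)+4\int\langle(f\xi_i)^N,P^N\rangle-2\int|P^N|^2$) and the eigenvalue identity $\int\langle(f\xi_i)^N,P^N\rangle=\tfrac{6-\lambda}{2}\int|P|^2$. The divergence is only in the closing estimate. The paper expands $\int|f\xi_i^N|^2$ through the decomposition $f\xi_i=X\oplus P$ (its equation for $\int|f\xi_1^N|^2$ in terms of $\int|P^N|^2$, $\int|P^T|^2$, $\int|X^N|^2$), then applies Young's inequality with a hand-picked parameter $\gamma=2$ to reach exactly $-2\int|P^N|^2+4\int\langle(f\xi_i)^N,P^N\rangle\le\tfrac32\int|f\xi_i^N|^2$. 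Your two-sided Cauchy--Schwarz plus quadratic optimisation is shorter, avoids the auxiliary expansions, and yields the sharper coefficient $\tfrac{8(5-\lambda)}{(6-\lambda)^2}\le\tfrac{32}{25}<\tfrac32$; in particular your argument would push the threshold in Theorem~\ref{maintheorem} from $\lambda_1<\tfrac{n-2}{2n}$ to the better $\lambda_1<\tfrac{18(n-2)}{25n}$ (and further if one keeps the $\lambda$-dependence). The paper's longer route has the advantage of tracking $\int|X^N|^2$ explicitly, but as written both arguments prove the stated proposition, yours more economically.
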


\begin{proof}
     Since $\int f=0$, $-\Delta x_i=2x_i$ and $\xi_i\cdot \xi_j=\delta_{ij}-x_ix_j$,

\[
\int_\Sigma -f\xi_ia_j\xi_j=\int_\Sigma f a_j x_i x_j
= -\frac{1}{\lambda}\int_\Sigma\Delta f a_jx_ix_j =-\frac{2}{\lambda} \int_\Sigma f \nabla x_i \cdot a_j \nabla x_j 
+ \frac{4 }{\lambda} \int_\Sigma f x_i a_jx_j
\]

\noindent Rearranging and using $\nabla^\Sigma x_i=\xi_i^T$,

\[
\left( \lambda - 4\right) \int_\Sigma f a_j x_i x_j =(4-\lambda)\int_\Sigma f\xi_i\cdot a_j\xi_j
= - 2 \int_\Sigma f \nabla x_i \cdot a_j\nabla x_j = -2\int_\Sigma f\xi_i^T\cdot a_j\xi_j^T
\]

\noindent hence:
\begin{equation}\label{55}
    \int_\Sigma f\xi_i\cdot a_j\xi_j=-\frac{2}{4-\lambda}\int_\Sigma f\xi_i^T\cdot a_j\xi_j^T
\end{equation}
\begin{equation}\label{3}
    \int f\xi^N_ia_j\xi_j^N=-\frac{6-\lambda}{4-\lambda}\int f\xi^T_ia_j\xi_j^T=\frac{6-\lambda}{2}\int f\xi_i\cdot a_j\xi_j
\end{equation}

 \noindent Consider now an $L^2$ orthogonal decomposition $f\xi_1=X\oplus a_j\xi_j$.
    \begin{align}
        D^2E(X)
        &= D^2E(f\xi_1)+D^2E(a_j\xi_j)-2\int \nabla(f\xi_1)\nabla(a_j\xi_j)+4\int f\xi_1^Na_j\xi_j^N+2\int f\xi_1^T a_j\xi_j^T\label{1} 
    \end{align}
    The mixed gradient term satisfies
    \begin{equation}\label{2}
      -2  \int \nabla(f\xi_1)\nabla (a_j\xi_j)=2\int f\xi_1(\nabla_v( -a_jx_jv))=-2\int f\xi_1^Ta_j\xi_j^T
    \end{equation}
    By \eqref{1} and \eqref{2},
    \begin{align}
       D^2E(X) =&D^2E(f\xi_1)+D^2E(a_j\xi_j)-2\int \nabla(f\xi_1)\nabla(a_j\xi_j)+4\int f\xi_1a_j\xi_j-2\int f\xi_1^T a_j\xi_j^T\\
        =& D^2E(f\xi_1)+D^2E(a_j\xi_j) +4\int f\xi_i^Na_j\xi_j^N
    \end{align}
  
    \noindent and hence
    \begin{equation}
        D^2E(X)=D^2E(f\xi_1)-2\int |a_j\xi_j^N|^2+4\int f\xi_1^Na_j\xi_j^N.
    \end{equation}
    Using $\int Xa_j\xi_j=0$, and the equations satisfied by $f$ and $\xi_v$ we compute:
    \begin{align}
        \int |f\xi_1^N|^2=&\int |a_j\xi_j^N|^2+\int |X^N|^2 +2\int X^Na_j\xi_j^N\\
        =&\int |a_j\xi_j^N|^2+\int |X^N|^2-2\int X^Ta_j\xi_j^T\\=& \int |a_j\xi_j^N|^2+\int |X^N|^2-2\int f\xi_1^Ta_j\xi_j^T+2\int |a_j\xi_j^T|^2\\
        \overset{\eqref{55}}{=} &\int |a_j\xi_j^N|^2+\int |X^N|^2+{(4-\lambda)}\int f\xi_1a_j\xi_j+2\int |a_j\xi_j^T|^2\\
        =&\int |a_j\xi_j^N|^2+\int |X^N|^2+{(4-\lambda)}\int |a_j\xi_j|^2+2\int |a_j\xi_j^T|^2\\=&(5-\lambda)\int |a_j\xi_j^N|^2+\int |X^N|^2+{(6-\lambda)}\int |a_j\xi_j^T|^2\label{44}\\
    \end{align}
    as well as the tangential contribution
      \begin{align}
        \int |f\xi_1^T|^2=&\int |a_j\xi_j^T|^2+\int |X^T|^2 +2\int X^Ta_j\xi_j^T\\
        =&\int |a_j\xi_j^T|^2+\int |X^T|^2-2\int X^Na_j\xi_j^N\\=& \int |a_j\xi_j^T|^2+\int |X^T|^2-2\int f\xi_1^Na_j\xi_j^N+2\int |a_j\xi_j^N|^2\\
        \overset{\eqref{55}}{=} &\int |a_j\xi_j^T|^2+\int |X^T|^2-{(6-\lambda)}\int f\xi_1a_j\xi_j+2\int |a_j\xi_j^N|^2\\
        =&\int |a_j\xi_j^T|^2+\int |X^T|^2-{(6-\lambda)}\int |a_j\xi_j|^2+2\int |a_j\xi_j^N|^2\\=&-(5-\lambda)\int |a_j\xi_j^T|^2+\int |X^T|^2-{(4-\lambda)}\int |a_j\xi_j^N|^2
    \end{align}
    Combining the two, we obtain: 
    \begin{align}
       & 2\int X^Ta_j\xi_j^T =-2\int X^Na_j\xi_j^N=-(6-\lambda)\int |a_j\xi_j^T|^2-(4-\lambda)\int |a_j\xi_j^N|^2\\
       \implies & -2\int f\xi_1^Na_j\xi_j^N + 2\int |a_j\xi_j^N|^2 = -(6-\lambda)\int |a_j\xi_j^T|^2-(4-\lambda)\int |a_j\xi_j^N|^2\\
       \implies &4\int f\xi_1^Na_j\xi_j^N= 2(6-\lambda)\int |a_j\xi_j^T|^2+2(6-\lambda)\int |a_j\xi_j^N|^2
    \end{align}
    By Hölder's inequality
    \begin{equation}\label{888}
        2(6-\lambda)\int |a_j\xi_j^T|^2+2(6-\lambda)\int |a_j\xi_j^N|^2\leq \frac{2}{\gamma}\int |f\xi_1^N|^2+2\gamma\int |a_j\xi_j^N|^2.
    \end{equation}
    By \eqref{44} and \eqref{888} with the choice $\gamma=2$,
    \begin{align}
        &\frac{3}{2}\int |f\xi_1^N|^2=\int |f\xi_1^N|^2+\frac{1}{2}\int |f\xi_1^N|^2\\\geq &\, 2(6-\lambda)\int |a_j\xi_j^T|^2+2(4-\lambda)\int |a_j\xi_j^N|^2+\frac{1}{2}\big((5-\lambda)\int |a_j\xi_j^N|^2+\int |X^N|^2+{(6-\lambda)}\int |a_j\xi_j^T|^2\big)\\
       \geq &\,\frac{5}{2}(6-\lambda)\int |a_j\xi_j^T|^2+\big(2(4-\lambda)+\frac{5-\lambda}{2}\big)\int |a_j\xi_j^N|^2.
    \end{align}
    Hence, 
    \begin{align}
        &\quad \frac{3}{2}\int |f\xi_1^N|^2+2\int |a_j\xi_j^N|^2
        \\
       &\geq \frac{5}{2}(6-\lambda)\int |a_j\xi_j^T|^2+\big(2(4-\lambda)+\frac{5-\lambda}{2}+2\big)\int |a_j\xi_j^N|^2\\
       &\geq 2(6-\lambda)\int |a_j\xi_j^T|^2+2(6-\lambda)\int |a_j\xi_j^N|^2\quad\text{for }\lambda<1\\
       &= 4\int f\xi_1^Na_j\xi_j^N.
    \end{align}
   where we used that $\frac{5-\lambda}{2}\geq 2$ whenever $\lambda\leq 1$. Finally, we compute
    \begin{align}
        D^2E(X)&=D^2E(a_j\xi_j)+D^2E(f\xi_1)+4\int f\xi_1^Na_j\xi_j^N\\&< 
        -2\int |a_j\xi_j^N|^2-\frac{3}{2}\int |f\xi_1^N|^2+4\int f\xi_1^Na_j\xi_j^N\\&<0.
    \end{align}
which concludes the proof of the Proposition.
\end{proof}
We can now prove the main Theorem:
\begin{theorem}
    Let $\Sigma\subset S^n$ be a minimal immersed surface with $\lambda_1(\Sigma)<\frac{n-2}{2n}$. Then,
    \begin{equation}
        \text{there exists }X\perp \xi_1,...,\xi_{n+1}\,\,\,\text{with }D^2E(X)<0. 
    \end{equation}
    Equivalently, there exists a vector field $X$ on the surface $\Sigma$, $\int_\Sigma X=0$ and $X(x)\cdot x\equiv 0$,
    \[
        \sum\limits_{i=1}^{n+1}\int_\Sigma |\nabla X^i|^2-2|X^i|^2<0.
    \]
\end{theorem}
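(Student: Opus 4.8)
The plan is to combine Proposition~\ref{prop1} with Proposition~\ref{proposition}, using an eigenfunction for $\lambda_1(\Sigma)$ as the input function $f$. First I would set $\lambda=\lambda_1(\Sigma)$ and take $f$ to be a first eigenfunction, so that $-\Delta f=\lambda f$ and $\int_\Sigma f=0$. The hypothesis $\lambda_1(\Sigma)<\frac{n-2}{2n}$ in particular forces $\lambda<1$ (indeed $\lambda<\tfrac12$), so both propositions apply. The role of $f$ being an eigenfunction is twofold: it is what makes the identities in the proof of Proposition~\ref{proposition} valid (the manipulations there integrate by parts against $-\Delta f=\lambda f$), and it lets me evaluate the summed quantity $\sum_i D^2E(f\xi_i)$ from Proposition~\ref{prop1} explicitly in terms of $\lambda$.

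Next I would compute, using Proposition~\ref{prop1} with $\int_\Sigma|\nabla f|^2=\lambda\int_\Sigma f^2$,
\begin{equation}
    \sum_{i=1}^{n+1} D^2E(f\xi_i)=\bigl(n\lambda-(2n-4)\bigr)\int_\Sigma f^2.
\end{equation}
The condition $\lambda<\frac{n-2}{2n}=\frac{2n-4}{2n}$ is exactly $n\lambda<2n-4$, i.e. $n\lambda-(2n-4)<0$, so the total $\sum_i D^2E(f\xi_i)$ is strictly negative. In particular there is at least one index $i$ with $D^2E(f\xi_i)<0$. However, to invoke Proposition~\ref{proposition} I need the sharper pointwise-averaged bound $D^2E(f\xi_i)<-\tfrac32\int_\Sigma|f\xi_i^N|^2$ for some $i$. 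I would obtain this by the same summation trick: I would show that $\sum_i D^2E(f\xi_i)+\tfrac32\sum_i\int_\Sigma |f\xi_i^N|^2<0$. Since $\sum_i |\xi_i^N|^2=|\xi_i|^2$-type identities give $\sum_i|\xi_i^N|^2 = \sum_i|\xi_i|^2 - \sum_i|\xi_i^T|^2 = (n-1)-(n-2\cdot\text{something})$; concretely $\sum_i|\xi_i|^2=n+1-\sum_i x_i^2=n$ and $\sum_i|\xi_i^T|^2=\sum_i|\nabla x_i|^2=|\nabla u|^2=2$, so $\sum_i|\xi_i^N|^2=n-2$. Hence
\begin{equation}
    \sum_{i=1}^{n+1}\Bigl(D^2E(f\xi_i)+\tfrac32\int_\Sigma|f\xi_i^N|^2\Bigr)=\bigl(n\lambda-(2n-4)\bigr)\int_\Sigma f^2+\tfrac32(n-2)\int_\Sigma f^2,
\end{equation}
which is negative precisely when $n\lambda<2n-4-\tfrac32(n-2)=\tfrac{n-2}{2}$, i.e. $\lambda<\frac{n-2}{2n}$. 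This is exactly our hypothesis, so for at least one $i$ we get $D^2E(f\xi_i)<-\tfrac32\int_\Sigma|f\xi_i^N|^2$.

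For that index $i$, Proposition~\ref{proposition} applies and yields that the $L^2$-orthogonal projection $X:=X_f^i$ of $f\xi_i$ onto the orthogonal complement of $\mathrm{span}\{\xi_1,\dots,\xi_{n+1}\}$ satisfies $D^2E(X)<0$. By construction $X\perp\xi_v$ for all $v$, which is equivalent to $\int_\Sigma X\cdot\xi_i=\int_\Sigma X^i=0$ for all $i$, i.e. $\int_\Sigma X=0$ as a map into $\mathbb{R}^{n+1}$; and $X$ is tangent to $S^n$ along $\Sigma$ since $f\xi_i$ and all the $\xi_v$ are, giving $X(x)\cdot x\equiv0$. Rewriting $D^2E(X)=\sum_i\int_\Sigma|\nabla X^i|^2-2|X^i|^2$ gives the stated inequality. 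The only point that needs a word of care — and the one I'd flag as the main obstacle — is making sure the index $i$ for which the averaged inequality holds is the \emph{same} index to which Proposition~\ref{proposition} is applied; since both the "$<-\tfrac32\int|f\xi_i^N|^2$" hypothesis and the conclusion of Proposition~\ref{proposition} are about a single fixed $i$, this is automatic, but one should not accidentally sum the conclusion over $i$. A secondary subtlety is the degenerate possibility $X=0$ (i.e. $f\xi_i$ already lies in the Möbius span); but then $D^2E(f\xi_i)=D^2E(a_j\xi_j)\geq -2\int|a_j\xi_j^N|^2=-2\int|f\xi_i^N|^2$, contradicting $D^2E(f\xi_i)<-\tfrac32\int|f\xi_i^N|^2$ unless $f\xi_i^N\equiv 0$, and if $\int|f\xi_i^N|^2=0$ for every $i$ then $\sum_i\int|f\xi_i^N|^2=(n-2)\int f^2=0$ forces $n=2$ (excluded, as then $\frac{n-2}{2n}=0$) or $f\equiv0$ (excluded), so $X\neq0$ and the strict inequality is genuine.
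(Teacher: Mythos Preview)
Your argument is correct and follows the same route as the paper: sum the $D^2E(f\xi_i)$ via Proposition~\ref{prop1}, use $\sum_i|\xi_i^N|^2=n-2$ to see that $\sum_i\bigl(D^2E(f\xi_i)+\tfrac32\int|f\xi_i^N|^2\bigr)<0$ exactly when $\lambda<\frac{n-2}{2n}$, pick an index $i$ realizing a strictly negative summand, and feed it into Proposition~\ref{proposition}. Your ``secondary subtlety'' paragraph, however, contains a sign slip: from $D^2E(f\xi_i)=-2\int|f\xi_i^N|^2$ and $D^2E(f\xi_i)<-\tfrac32\int|f\xi_i^N|^2$ you get $-2c<-\tfrac32 c$, which forces $c>0$, not $c=0$, so no contradiction arises that way. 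This does not matter for the proof, since the conclusion $D^2E(X)<0$ of Proposition~\ref{proposition} already forces $X\neq 0$; you can simply drop that paragraph.
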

\begin{proof}
    By Proposition \ref{proposition} it suffices to prove that there exists $f\xi_i$ satisfying
    \begin{equation}
        D^2E(f\xi_i)<-\frac{3}{2}\int_\Sigma |f\xi_i^N|^2.
    \end{equation}
    By Proposition \ref{prop1}, applied to $-\Delta f=\lambda_1 f$,
     \begin{equation}
       \sum\limits_{i=1}^{n+1} D^2E(f\xi_i)= n\int_{\Sigma}|\nabla f|^2-(2n-4)\int_\Sigma f^2=(n\lambda-2n+4)\int_\Sigma f^2.
    \end{equation}
    Using that $\sum\limits_{i=1}^{n+1}|\xi_i^\perp|^2=n-2$,
     \begin{equation}
       \sum\limits_{i=1}^{n+1} D^2E(f\xi_i)=\sum\limits_{i=1}^{n+1}\frac{n\lambda-2n+4}{n-2}\int_\Sigma |f\xi_i^N|^2.
    \end{equation}
   In particular, there exists $i_0\in\{1,...,n+1\}$ with
   \begin{equation}
       D^2E(f\xi_i)< \frac{n\lambda-2n+4}{n-2}\int_\Sigma |f\xi_i^N|^2
   \end{equation}
   and it suffices to verify
   \begin{equation}
       \frac{n\lambda-2n+4}{n-2}<-\frac{3}{2}\iff 2n\lambda-4n+8<-3n+6\iff 2n\lambda+2<n\iff \lambda <\frac{n-2}{2n}.
   \end{equation}
   This concludes the proof of the main Theorem.
\end{proof}

\end{document}